\documentclass[12pt,draft]{amsart}
\usepackage[all]{xy}
\usepackage{amsfonts}
\usepackage{amssymb}
\usepackage{amsthm}

\usepackage[english]{babel}
\usepackage{enumerate}

\usepackage{color}

\textheight23cm \textwidth16.5cm \addtolength{\topmargin}{-25pt}
\evensidemargin0.cm \oddsidemargin0.cm

\newtheorem{teo}{Theorem}[section]
\newtheorem{lem}[teo]{Lemma}

\newtheorem{cor}[teo]{Corollary}

\newtheorem{dfn}[teo]{Definition}
\newtheorem{rmk}[teo]{Remark}

\newtheorem*{theoremA}{Theorem A}
\newtheorem*{theoremB}{Theorem B}

\def\<{\langle}
\def\>{\rangle}

\def\a{\alpha}
\def\b{\beta}

\def\t{\tau}

\def\f{{\varphi}}

\def\Z{{\mathbb Z}}

\def\End{\mathop{\rm End}\nolimits}
\def\Ker{\mathop{\rm Ker}\nolimits}
\def\Im{\mathop{\rm Im}\nolimits}

\def\Coker{\operatorname{Coker}}

\def\1{\mathbf 1}

 \def\rk{\operatorname{rk}}
\def\urk{\operatorname{urk}}

\def\St{\operatorname{St}}




\begin{document}
\title[Twisted conjugacy and finite rank]
{Twisted conjugacy in residually finite groups of finite Pr\"ufer rank}

\author{Evgenij Troitsky}
\thanks{The work is supported by the Russian Science Foundation under grant 21-11-00080.}
\address{Moscow Center for Fundamental and Applied Mathematics, MSU Department,\newline
	Dept. of Mech. and Math., Lomonosov Moscow State University, 119991 Moscow, Russia}
\email{troitsky@mech.math.msu.su}
\keywords{Reidemeister number,  
twisted conjugacy class, 
Burnside-{Frobenius} theorem, %
group of finite rank,  
residually finite group,
soluble group,
unitary dual, 
finite-dimensional representation 
}
\subjclass[2000]{20C; 
20E45; 
22D10; 
37C25; 
}

\begin{abstract}
Suppose, $G$ is a residually finite group of finite upper rank admitting an automorphism $\varphi$ with finite Reidemeister number $R(\varphi)$ (the number of $\varphi$-twisted conjugacy classes). 
We prove that such $G$ is soluble-by-finite (in other words, any residually finite group of finite upper rank, which is not soluble-by-finite, has the $R_\infty$ property). 

This reduction is the first step in the proof of the second main theorem of the paper:
suppose, $G$ is a residually finite group of finite Pr\"ufer rank and $\varphi$ is its automorphism with 
$R(\varphi)<\infty$; then $R(\varphi)$ is equal to the number of equivalence classes of finite-dimensional irreducible unitary representations
of  $G$, which are fixed points of the dual map $\widehat{\varphi}:[\rho]\mapsto [\rho\circ \varphi]$
(i.e., we prove the TBFT$_f$, the finite version of the conjecture about the
 twisted Burnside-Frobenius theorem, for such groups).
\end{abstract}

\maketitle

\section*{Introduction}

If $d(H)$ denotes the minimal number of generators of a group $H$, then the \emph{(Pr\"ufer) rank} of a group $G$
is 
$$
\rk(G):=\sup\{d(H)\colon H \mbox{ is a finitely generated subgroup of } G\}
$$
and the \emph{upper rank} of $G$ is
$$
\urk(G):=\sup\{\rk(\overline{G})\colon \overline{G} \mbox{ is a finite quotient of } G\}.
$$
Subgroups of finite rank attracted much interest since the end of 80's,
in particular, in connection with the
subgroup growth and related questions \cite{LubotSegalBook}. 

The main results of the present paper are as follows.   

\begin{theoremA}
Suppose that $G$ is a residually finite group
of finite upper rank and $\f:G\to G$ is an automorphism with
$R(\f)<\infty$. 
Then $G$ is a soluble-by-finite group. 
\end{theoremA}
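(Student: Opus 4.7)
My plan is to argue by contradiction: assuming $G$ is not soluble-by-finite (while $\urk(G)<\infty$, $G$ residually finite, and $R(\f)<\infty$), I will construct a sequence of finite $\f$-invariant quotients of $G$ whose Reidemeister numbers grow without bound, contradicting $R(\f)<\infty$. The quotients are the canonical ones: let
$$G_n := \bigcap\{N\triangleleft G\colon [G:N]\le n\},$$
a characteristic (hence $\f$-invariant) subgroup. Residual finiteness together with $\urk(G)<\infty$ forces the profinite completion $\widehat G$ to be a profinite group of finite rank, hence topologically finitely generated (Lubotzky--Mann), so only finitely many normal subgroups of $G$ have any given finite index, and $[G:G_n]<\infty$. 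The induced automorphisms $\bar\f_n$ on $Q_n:=G/G_n$ satisfy $R(\bar\f_n)\le R(\f)$ because the projection $G\to Q_n$ maps $\f$-twisted classes surjectively onto $\bar\f_n$-twisted classes, and each $Q_n$ is a finite group of Pr\"ufer rank $\le r:=\urk(G)$.

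Next I apply the structure theory of finite groups of bounded Pr\"ufer rank: such a group admits a characteristic series of bounded length whose factors are either elementary abelian or direct products of isomorphic non-abelian finite simple groups of Lie rank $\le g(r)$ (this is essentially a consequence of the classification of finite simple groups combined with the results reviewed in \cite{LubotSegalBook}). If $G$ is not soluble-by-finite, then for infinitely many $n$ the soluble radical of $Q_n$ has index growing with $n$; factoring $Q_n$ by this radical (which is $\bar\f_n$-invariant, being characteristic) produces an $\bar\f_n$-invariant semisimple quotient whose non-abelian composition factors are finite simple groups $S_k$ of bounded Lie rank but with $|S_k|\to\infty$.

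The decisive step is the claim that any accompanying family $\{\sigma_k\in\Aut(S_k)\}$ has $R(\sigma_k)\to\infty$. By the finite Burnside--Frobenius identity $R(\sigma_k)$ equals the number of $\sigma_k$-fixed conjugacy classes of $S_k$; the number of conjugacy classes of a Lie-type simple group of bounded Lie rank grows without bound with its size, while any outer automorphism (the outer automorphism group being of bounded order) must fix a definite positive fraction of these classes. The principal technical obstacle is then to transport this lower bound from the simple sections up to $R(\bar\f_n)$: since Reidemeister numbers are not multiplicative across short exact sequences (one only has the automatic inequality $R(\bar\f)\le R(\f)$ for quotients), one needs an auxiliary construction that lifts $\sigma_k$-twisted classes on the simple section back to distinct $\bar\f_n$-twisted classes on $Q_n$ — in spirit analogous to Jiang's inequalities from Nielsen fixed-point theory. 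Granting this lifting, $R(\bar\f_n)\to\infty$ produces the desired contradiction, and the uniform class-count for Lie-type groups (typically obtained from Deligne--Lusztig theory) supplies the remaining quantitative input.
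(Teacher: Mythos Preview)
Your outline is not the paper's argument, and as written it has two real gaps rather than one.

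\textbf{What the paper does.} The paper never touches simple sections or Deligne--Lusztig theory. It combines Jabara's inequality $|C(\psi)|\le \alpha(R(\psi))$ for finite groups with Jaikin-Zapirain's theorem (Theorem~\ref{teo:Jaikin2002}): a finite group of rank $r$ with an automorphism having $m$ fixed points contains a \emph{characteristic} soluble subgroup of $(m,r)$-bounded index and $r$-bounded derived length. Since every finite quotient $F_\lambda=G/H_\lambda$ satisfies $R(\f_\lambda)\le R$ and hence $|C(\f_\lambda)|\le\alpha(R)$, each $F_\lambda$ has such a subgroup $F_\lambda^0$ with index $\le I(\alpha(R),r)$ and derived length $\le L(r)$, uniformly in $\lambda$. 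Pulling back and intersecting (finitely many subgroups of bounded index, by Theorem~\ref{teo:num_sub_frank}) gives a characteristic finite-index $H\le G$ whose image in every $F_\lambda$ lies in $F_\lambda^0$; the uniform derived-length bound then forces $H$ itself to be soluble. No contradiction argument, no simple groups.

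\textbf{The gaps in your route.} First, the assertion that an automorphism $\sigma$ of a finite simple group of bounded Pr\"ufer rank ``must fix a definite positive fraction'' of the conjugacy classes is not justified: a cyclic group of bounded order acting on a set need not fix any positive fraction of it, and you give no structural reason why the action on conjugacy classes is special. You would need to show $R(\sigma_k)\to\infty$, and the cleanest known way to do that is precisely Jaikin-Zapirain's theorem (applied to simple $S$: if $|S|$ exceeds the index bound then $|C_S(\sigma)|$ must be large) together with Jabara's inequality $R(\sigma)\ge \alpha^{-1}(|C_S(\sigma)|)$ --- i.e.\ the same two ingredients the paper uses directly, only repackaged. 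Second, you explicitly name the ``lifting'' from simple sections back to $Q_n$ as the principal technical obstacle and then simply grant it; that is an admitted gap, not a proof. (In fact this step is the lesser problem: since $\rk(Q_n)\le r$ bounds the number of simple direct factors of $Q_n/\operatorname{Rad}(Q_n)$, some factor has size $\to\infty$, and a standard orbit decomposition of the induced automorphism on the direct product reduces $R$ on the semisimple quotient to $R$ of an automorphism on a single large factor. But you still need the first gap filled.)

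So your strategy is not wrong in spirit, but it is incomplete, and completing it would amount to re-deriving the content of Theorem~\ref{teo:Jaikin2002} rather than bypassing it.
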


A reformulation is: 
any residually finite group of finite upper rank, which is not soluble-by-finite, has the $R_\infty$ \emph{property}.

\begin{theoremB}
Suppose, $G$ is a   residually finite group
of finite rank. Then TBFT$_f$ is true for $G$.
\end{theoremB}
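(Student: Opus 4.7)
The plan is to use Theorem~A to reduce to the soluble-by-finite case, then descend to a finite quotient of $G$ where the classical Burnside--Frobenius theorem applies, and finally transport the equality back to $G$.

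I would first observe that finite Pr\"ufer rank implies finite upper rank, so Theorem~A applied to the pair $(G,\f)$ (for any automorphism $\f$ with $R(\f)<\infty$) forces $G$ to be soluble-by-finite. Combined with residual finiteness and the rank hypothesis, classical structure results (Mal'cev, Robinson, Wehrfritz, Lubotzky--Mann) then imply that $G$ is minimax and a finite-index characteristic subgroup of $G$ is linear; one also obtains a $\f$-invariant characteristic chain with tightly controlled (cyclic or quasicyclic) factors.

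The technical heart of the argument is then to construct a $\f$-invariant normal subgroup $N \trianglelefteq G$ of finite index with two properties: (i) the projection $p\colon G\to G/N$ induces a bijection between $\f$-twisted conjugacy classes in $G$ and $\ov{\f}$-twisted conjugacy classes in $G/N$; (ii) inflation along $p$ induces a bijection between the $\widehat{\ov\f}$-fixed points in the finite-dimensional irreducible unitary dual of $G/N$ and the $\widehat{\f}$-fixed points in that of $G$. Given such an $N$, the classical Burnside--Frobenius theorem for the finite group $G/N$ yields $R(\ov{\f})=|\Fix(\widehat{\ov{\f}})|$, and (i)--(ii) transport this to TBFT$_f$ for $G$. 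For (i) I would start from finitely many Reidemeister class representatives (provided by $R(\f)<\infty$) and use residual finiteness to separate them in some finite $\f$-invariant quotient, then enlarge $N$ further to lift any $\ov\f$-twisted conjugations back to $G$, for which the finite rank structure (in particular the control over Fitting and commutator subgroups) is essential.

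The main obstacle is property~(ii): showing that every $\widehat{\f}$-fixed finite-dimensional irreducible unitary representation of $G$ factors through a finite quotient. This is false for general residually finite groups, but for groups of finite Pr\"ufer rank the minimax structure and linearity of a finite-index subgroup severely constrain such representations, and a rigidity argument applied to the intertwiner equation $\rho\circ\f\cong\rho$---exploiting the boundedness of $\dim\rho$ together with the rank bound on $G$---should force $\ker\rho$ to have finite index. It is precisely at this step that the full finite Pr\"ufer rank hypothesis is needed, going beyond the weaker finite upper rank used in Theorem~A.
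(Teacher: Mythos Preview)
Your plan misidentifies where the difficulty lies. Property~(ii)---that every $\widehat\f$-fixed finite-dimensional irreducible unitary representation $\rho$ factors through a finite quotient---holds for \emph{any} group with $R(\f)<\infty$, with no rank hypothesis whatsoever: the twisted character $g\mapsto\Tr(J\rho(g))$, where $J$ intertwines $\rho$ and $\rho\circ\f$, is a $\f$-class function, hence takes at most $R(\f)$ values, and a short argument with its left translates forces $|\rho(G)|<\infty$ (this is exactly the ``only if'' half of Lemma~\ref{lem:equiv_tbftf}). So your proposed rigidity argument via linearity and the minimax structure is aimed at a non-problem, and the finite Pr\"ufer rank is \emph{not} needed at that step.

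The genuine obstacle is~(i), and your sketch for it has a gap. Residual finiteness lets you separate the \emph{elements} $g_1,\dots,g_R$ in a finite $\f$-invariant quotient, but gives no control over whether their \emph{$\f$-classes} merge there: two elements can fail to be $\f$-twisted conjugate in $G$ yet become $\ov\f$-twisted conjugate in every finite quotient. Your phrase ``enlarge $N$ further to lift any $\ov\f$-twisted conjugations back to $G$'' is not an argument---you would have to rule out infinitely many potential conjugating elements simultaneously, and nothing about Fitting subgroups or linearity does that for you.

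The paper's route is different. First, Lemma~\ref{lem:equiv_tbftf} shows that (i) \emph{alone} is equivalent to TBFT$_f$, so (ii) can be discarded. Then (i) is established by induction along the derived series of the soluble characteristic subgroup produced by Theorem~A. The inductive step rests on two facts you do not mention: (a) any automorphism of an \emph{abelian} group of finite rank with finite Reidemeister number has finite fixed-point set (Theorem~\ref{teo:fin_fix_Ai}, proved via the quasicyclic decomposition), which through Theorem~\ref{teo:ext} propagates $R<\infty$ down the derived series; and (b) a finite-by-(abelian of finite rank) group has only finitely many inner automorphisms (Lemma~\ref{lem:fin_rank_fin_inn}), so its Reidemeister classes are already separated by a single finite quotient (Corollary~\ref{cor:tbft_for_finite_by_ab}). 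These are what actually manufacture the subgroup $N$ with property~(i); the structural input from finite rank enters through~(a) and~(b), not through linearity or a rigidity argument on representations.
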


Theorem A may be considered as a development of \cite{Jabara} and at the same time as a
supporting result for the following conjecture \cite{FelTroJGT}: a residually finite finitely
generated group with an automorphism with finite Reidemeister number is soluble-by-finite.  
The proof of Theorem A uses in particular a result of A.~Jaikin-Zapirain \cite{Jaikin2002}.

The initial conjecture about the TBFT was formulated by A.~Fel'shtyn and R.~Hill
as follows: if the Reidemeister number is finite then it coincides with the 
number of equivalence classes of irreducible unitary representations
of  $G$, which are fixed points of the dual map $[\rho]\mapsto [\rho\circ \varphi]$
(see \cite{FelshB}).
A counterexample to TBFT from \cite{FelTroVer} caused a modified version,
namely TBFT$_f$, where only  finite-dimensional representations are taken
into account.
The widest class of groups with TBFT$_f$, namely polycyclic-by-finite, was detected
in \cite{polyc}. 
Then we have found several counterexamples, in particular,
in \cite{FeTrZi20} an example of a group, which has neither TBFT$_f$, nor TBFT, was found.
Also, TBFT$_f$ was proved for a class of wreath products \cite{TroLamp,TroitskyWreath2022}.

We prove in Theorem B the TBFT$_f$ for a wide class which is considered by many authors
as a natural extension of the class of polycyclic-by-finite groups.

The interest in Reidemeister theory, $R_\infty$, and TBFT comes from topological
dynamics and other fields (see an overview in \cite{FelshB,polyc}). For some overview of results 
related to $R_\infty$ and recent results one can see 
\cite{FelTroJGT,gowon1,Nasybullov2016JA} (these are several papers from a long publications list).  

Finite rank groups (nilpotent torsion-free) in the context of Reidemeister theory were studied 
in \cite{FelKlop2022,FelTroNilpFR2021RJMP}. Here we study a much wider class.

\smallskip
The paper is organized as follows. In Section \ref{sec:prelim} we recall and prove some necessary statements. Theorem A is proved in Section \ref{sec:jabaratype}.
In Section \ref{sec:abel} we prove statements for abelian quotients to be used in the next section.
Theorem B is proved in Section \ref{sec:tbftforfiniterank}.

\section{Preliminaries and reminding}\label{sec:prelim}

Suppose, that  $H$  is a normal subgroup of $G$ and $p:G\to G/H$ is the natural projection.
Let $H$ be invariant for an endomorphism $\f:G\to G$.
Denote by $\f':H\to H$ and by $\overline{\f}:G/H \to G/H$ the induced homomorphisms.
Evidently, $p$ induces a surjection of Reidemeister classes and
\begin{equation}\label{eq:epi}
R(\overline{\f})\le R(\f).
\end{equation}

\begin{dfn}
	\rm Denote by $C(\f)$ the subgroup of $G$, formed by elements fixed by $\f:G\to G$: $C(\f):=\{g\in G \colon \f(g)=g\}$.
\end{dfn}

The following theorem appeared in \cite{go:nil1}, see also \cite{polyc,GoWon09Crelle}.

\begin{teo}\label{teo:ext}
	For the above defined $G$, $H$, $\f$, $\f'$, and $\widetilde{\f}$,  we have the following properties.
	\begin{itemize}
		\item[a)] If $|C(\overline{\f})|=n$, then $R(\f')\le R(\f)\cdot n$;
		\item[b)] If $C(\overline{\f})=\{e\}$, then each Reidemeister class of $\f'$ is an intersection of the appropriate Reidemeister class of $\f$ and $H$; 
	\end{itemize}	
\end{teo}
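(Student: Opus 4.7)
The plan is to study the natural map
\[
\tau\colon H/\!\sim_{\varphi'}\;\longrightarrow\; G/\!\sim_\varphi,\qquad [h]_{\varphi'}\mapsto [h]_\varphi,
\]
which is well defined because every $\varphi'$-conjugator lying in $H$ is automatically a $\varphi$-conjugator in $G$. Once I bound the cardinality of each fibre of $\tau$ by $n:=|C(\overline{\varphi})|$, part (a) will follow by summing over $\varphi$-classes (there are at most $R(\varphi)$ of them), and part (b) will be the special case $n=1$, in which the fibre bound also forces each $\varphi'$-class to coincide with the intersection of its image $\varphi$-class with $H$.

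To bound the fibres, fix a $\varphi$-class meeting $H$ and pick representatives $h_1,\dots,h_k\in H$ of the distinct $\varphi'$-classes mapping into it. Normalising by $h_1$ (set $g_1=e$), I write $h_i=g_i h_1\varphi(g_i)^{-1}$ with $g_i\in G$ for each $i$. Projecting to $G/H$ gives
\[
\overline{e}=\overline{g_i}\cdot\overline{e}\cdot\overline{\varphi}(\overline{g_i})^{-1},
\]
so $\overline{g_i}\in C(\overline{\varphi})$. It remains to check that the assignment $i\mapsto\overline{g_i}$ is injective. Assume $\overline{g_i}=\overline{g_j}$; then $g_i=g_j k$ for some $k\in H$. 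Substituting $h_1=g_j^{-1}h_j\varphi(g_j)$ into $h_i=g_j k\,h_1\varphi(k)^{-1}\varphi(g_j)^{-1}$ and setting $\ell:=g_j k g_j^{-1}$, a direct calculation gives
\[
h_i=\ell\, h_j\,\varphi(\ell)^{-1}.
\]
Normality of $H$ ensures $\ell\in H$, so $h_i\sim_{\varphi'}h_j$, contradicting the choice of representatives. Hence $k\le n$, establishing (a).

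For (b), the hypothesis $C(\overline{\varphi})=\{e\}$ forces $n=1$, so $\tau$ is injective. Injectivity of $\tau$ means: if $h,h'\in H$ are $\varphi$-conjugate in $G$, they are already $\varphi'$-conjugate in $H$. Combined with the trivial inclusion of the $\varphi'$-class of $h$ into the $\varphi$-class of $h$ intersected with $H$, this yields the claimed equality. The only nonroutine step is the substitution identifying the new conjugator $\ell=g_j k g_j^{-1}$ inside $H$; once that computation is set up, both parts fall out of the same fibre analysis and there is no further obstacle.
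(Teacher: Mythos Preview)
The paper does not supply a proof of this theorem: it quotes the result from \cite{go:nil1} (with references also to \cite{polyc,GoWon09Crelle}) and uses it as a black box. So there is no in-paper argument to compare against.

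Your argument is correct and is essentially the standard one found in those references. The key computation---writing $h_i=g_i h_1\varphi(g_i)^{-1}$, observing $\overline{g_i}\in C(\overline{\varphi})$, and showing that a coincidence $\overline{g_i}=\overline{g_j}$ forces $h_i\sim_{\varphi'}h_j$ via the conjugator $\ell=g_j k g_j^{-1}\in H$---is exactly the mechanism used in the cited literature. One cosmetic point: when you write ``pick representatives $h_1,\dots,h_k$ of the distinct $\varphi'$-classes mapping into it'' you are implicitly assuming the fibre is finite; to be airtight, phrase it as ``suppose $h_1,\dots,h_k$ lie in pairwise distinct $\varphi'$-classes within a single fibre'' and conclude $k\le n$, which then bounds the fibre. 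This is purely a matter of presentation; the substance is fine, and part~(b) follows from the $n=1$ case exactly as you say.
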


Considering the equality
$$
yg\f(y^{-1})x= ygx x^{-1}\f(y^{-1})x= y(gx) (\tau_{x^{-1}} \circ\f)(y^{-1}),
$$
where $\tau_x (z):=  x z x^{-1}$,
one can deduce the following very useful statement (see e.g. \cite{polyc}).
\begin{lem}\label{lem:shift}
The shifts of Reidemeister classes of $\f$ are Reidemeister classes of $\tau_{x^{-1}} \circ \f$:
$$
\{g\}_\f x= \{gx\}_{\tau_{x^{-1}} \circ \f}.
$$	
In particular, $R(\tau_g\circ \f)=R(\f)$.
\end{lem}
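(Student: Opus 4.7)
The proof is essentially a bookkeeping of the algebraic identity displayed just before the lemma, together with the observation that right multiplication is a bijection of $G$. My plan is the following.

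First, I would unwind the definition $\{g\}_\f = \{y g \f(y^{-1}) : y \in G\}$, so that the right shift is
$$
\{g\}_\f \cdot x = \{\, y g \f(y^{-1}) x : y \in G\,\}.
$$
Then I would apply the identity in the excerpt, namely $y g \f(y^{-1}) x = y(gx)\bigl(x^{-1}\f(y^{-1})x\bigr) = y(gx)(\tau_{x^{-1}}\circ\f)(y^{-1})$, which I verify by inserting $xx^{-1}$ and recognizing $\tau_{x^{-1}}(z)=x^{-1}zx$. The right-hand side, as $y$ ranges over $G$, is precisely the Reidemeister class of $gx$ with respect to $\tau_{x^{-1}}\circ \f$, giving the asserted equality $\{g\}_\f x = \{gx\}_{\tau_{x^{-1}}\circ\f}$.

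For the ``in particular'' statement, I would specialise to $x = g^{-1}$, which makes $\tau_{x^{-1}}\circ\f = \tau_{g}\circ\f$, and argue that right translation by $x$ is a bijection of $G$ that carries the partition of $G$ into $\f$-classes onto the partition into $(\tau_g\circ\f)$-classes; this induces a bijection on the sets of classes and hence $R(\tau_g\circ\f)=R(\f)$. I expect no genuine obstacle here: the only thing worth being careful about is distinguishing left versus right conjugation conventions and making sure that the ``shift of a class is a class'' map is well defined (different representatives of $\{g\}_\f$ produce the same shifted class because the identity above depends only on $g$ and $x$, not on the chosen $y$).
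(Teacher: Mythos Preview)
Your argument is correct and is exactly the approach indicated in the paper: the displayed identity $yg\f(y^{-1})x = y(gx)(\tau_{x^{-1}}\circ\f)(y^{-1})$ immediately gives $\{g\}_\f x = \{gx\}_{\tau_{x^{-1}}\circ\f}$, and specialising $x=g^{-1}$ together with the bijectivity of right translation yields $R(\tau_g\circ\f)=R(\f)$.
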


\begin{teo}[\cite{Jaikin2002}]\label{teo:Jaikin2002}
Let $G$ be a finite group of rank $r$ admitting an automorphism
with $m$ fixed elements. Then $G$ has a characteristic soluble subgroup $H$, whose
index is $(m, r)$-bounded and whose derived length is $r$-bounded.
\end{teo}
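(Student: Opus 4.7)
This is a result cited from Jaikin--Zapirain, so the plan I would follow draws on the standard machinery for finite groups of bounded rank and uses, essentially, the classification of finite simple groups (CFSG). The argument splits into three pieces.

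\textbf{Simple sections of bounded rank.} By CFSG, a finite non-abelian simple group of rank at most $r$ must be alternating of bounded degree, of Lie type with bounded Lie rank over a field whose order is bounded in terms of $r$, or sporadic. So there are only finitely many isomorphism types depending on $r$, with orders bounded by some $B(r)$. In particular, any composition factor of $G$ is either cyclic or from this finite list.

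\textbf{Structure modulo the soluble radical.} Let $N$ be the (characteristic) soluble radical of $G$ and set $S := G/N$. Then $\f$ induces an automorphism $\overline{\f}$ on $S$; moreover $S$ has trivial soluble radical, so it embeds into $\Aut(T_1 \times \cdots \times T_k)$ with each $T_i$ a non-abelian finite simple group of rank $\le r$, hence of order at most $B(r)$ by the previous step. The automorphism $\overline{\f}$ permutes the factors $T_i$ and acts on each $\overline{\f}$-orbit of factors.

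\textbf{Bounding $k$ and the derived length (the hard step).} The main obstacle is to bound $k$, and hence $|S| = [G:N]$, by a function of $m$ and $r$. Here one must relate $|C(\f)|$ to the combinatorics of $\overline{\f}$ on the socle of $S$: fixed points of $\f$ on $G$ project to fixed cosets of $\overline{\f}$ on $S$, and a Hartley--Meixner style argument for automorphisms of products of bounded-order simple groups translates few fixed points into few $\overline{\f}$-orbits on $\{T_1,\dots,T_k\}$. This is the delicate part, since the relationship between fixed points upstairs and fixed cosets downstairs requires carefully tracking twisted-conjugacy data on $N$ (of the kind encoded in Theorem \ref{teo:ext}). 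Once $k$ is bounded, the size $|S|$ is $(m,r)$-bounded, so $N$ has $(m,r)$-bounded index in $G$. Finally, by a classical result of Kov\'acs, a finite soluble group of rank $r$ has derived length bounded by a function of $r$ alone, so $N$ itself has $r$-bounded derived length, and one may take $H = N$ (or the soluble radical of the preimage in $G$ of a characteristic soluble subgroup of $S$ of $(m,r)$-bounded index) to obtain a characteristic soluble subgroup with the required bounds.
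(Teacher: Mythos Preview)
The paper does not prove this theorem at all: it is quoted verbatim from Jaikin--Zapirain \cite{Jaikin2002} and used as a black box in the proof of Theorem~A. So there is no ``paper's own proof'' to compare against; you correctly identified that this is an external citation.

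That said, your sketch of what the proof should look like contains a genuine error in the very first step. It is \emph{not} true that a non-abelian finite simple group of Pr\"ufer rank at most $r$ has order bounded by some $B(r)$. For odd primes $p$, every proper subgroup of $\mathrm{PSL}(2,p)$ is cyclic, dihedral, metacyclic (Borel), or isomorphic to $A_4$, $S_4$, $A_5$; all of these are $2$-generated, so $\rk(\mathrm{PSL}(2,p))=2$ for every odd prime $p$. Thus there are infinitely many pairwise non-isomorphic finite simple groups of rank $2$, and no bound $B(r)$ exists. Your subsequent steps (bounding $|S|$ by $B(r)^k$ once $k$ is bounded) therefore collapse.

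What actually happens in \cite{Jaikin2002} is closer to the reverse of your plan: the $(m,r)$-bound on the index of the soluble radical comes primarily from the automorphism with $m$ fixed points (in the spirit of Hartley's theorem, which already gives an $m$-bounded index for the soluble radical using CFSG), while the rank hypothesis is what allows one to bound the \emph{derived length} of that soluble part. In other words, $m$ controls the non-soluble top and $r$ controls the soluble bottom, not the other way round. Your final remark invoking a Kov\'acs-type bound on the derived length of finite soluble groups of rank $r$ is the one piece that is on target.
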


\begin{dfn}\rm
Denote the bounding functions for index and derived length from Theorem \ref{teo:Jaikin2002} by $I(m,r)$ and $L(r)$ respectively.
\end{dfn}

The following statement is well known (see \cite[Lemma 3.5]{MannSegal1990}).
We sketch here a more simple proof with a more weak estimation. 
\begin{teo}\label{teo:num_sub_frank}
The number $a_n(G)$ of subgroups of index $n$ in a finite upper rank group $G$ is finite.
More specifically $a_n(G) \le (n!)^r$, where $r=\urk(G)$. 
\end{teo}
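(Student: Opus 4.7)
My plan is to bound $a_n(G)$ by $|\Hom(G,S_n)|$ and then bound the latter using the rank hypothesis via the profinite completion. The one place where finite upper rank is genuinely used is to show that the profinite completion $\wh G$ is topologically generated by $r$ elements; this is the main technical point, though it amounts to a routine compactness argument.

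First I would associate to each subgroup $H\le G$ of index $n$ a permutation representation $\a_H\colon G\to S_n$, arising from the action of $G$ on $G/H$ after choosing a bijection $G/H\cong\{1,\dots,n\}$ sending $H\mapsto 1$. Since $H=\a_H^{-1}(\operatorname{Stab}(1))$, the assignment $H\mapsto \a_H$ (for any fixed choice of labeling per $H$) is injective, and hence $a_n(G)\le|\Hom(G,S_n)|$.

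Next, because $S_n$ is finite, $\Hom(G,S_n)$ is in natural bijection with the set of continuous homomorphisms $\wh G\to S_n$. The continuous finite quotients of $\wh G$ are precisely the finite quotients of $G$, so each has rank at most $r:=\urk(G)$. For every open normal subgroup $N\triangleleft\wh G$, the set $V_N\subset\wh G^r$ consisting of $r$-tuples whose images generate $\wh G/N$ is clopen and nonempty, and the family $\{V_N\}_N$ has the finite intersection property because $V_{N_1\cap\cdots\cap N_k}\subseteq V_{N_1}\cap\cdots\cap V_{N_k}$ and each $V_{N_1\cap\cdots\cap N_k}$ is nonempty. Compactness of $\wh G^r$ then produces a tuple $(\g_1,\dots,\g_r)$ lying in every $V_N$, and such a tuple topologically generates $\wh G$.

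Finally, any continuous homomorphism $\wh G\to S_n$ is determined by the images of $\g_1,\dots,\g_r$, so $|\Hom(G,S_n)|\le|S_n|^r=(n!)^r$. Combined with the first step, this yields $a_n(G)\le(n!)^r$ as desired.
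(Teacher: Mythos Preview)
Your proof is correct. Both your argument and the paper's hinge on the same M.~Hall idea---bounding subgroups of index $n$ by homomorphisms into $S_n$---but you apply it globally through the profinite completion, while the paper applies it locally in a single finite quotient. Concretely, the paper takes any finite list $H_1,\dots,H_s$ of index-$n$ subgroups, passes to a finite quotient $F=G/H_0$ (with $H_0$ the core of $\bigcap_i H_i$) in which the $H_i$ survive as distinct index-$n$ subgroups, and then invokes $a_n(F)\le (n!)^{d(F)}$ together with $d(F)\le \rk(F)=\urk(F)\le \urk(G)$. Your route instead establishes the stronger fact that $\wh G$ is topologically generated by $r$ elements (via the standard compactness argument), and then bounds $|\Hom(G,S_n)|=|\Hom_{\mathrm{cont}}(\wh G,S_n)|$ directly. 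The paper's version is more elementary and self-contained; yours yields as a by-product the topological $r$-generation of $\wh G$, which is a useful statement in its own right and is in the spirit of the Lubotzky--Segal reference the paper cites.
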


\begin{proof}
Suppose that $G$ has $s$ distinct subgroups of index $n$: $H_1,\dots,H_s$ (we do not suppose that the list is exhaustive). Consider $H=\cap_i H_i$ and the natural
left action of $G$ on the finite set of all left cosets by $H$. Let $H_0$ be the kernel of this action, which is a normal subgroup. Since $H_0 \subseteq H$, the natural projection $p: G \to G/H_0 =:F$ sends 	
$H_i$ to distinct subgroups of index $n$ of the finite group $F$. Thus, $s \le a_n(F)$.
For a finite (or more generally, finitely generated) group $F$ the following estimate is well known
from M.~Hall theorem
(see e.g. \cite[Vol.~1, p.~57]{kurosh}): $a_n(F)\le (n!)^d$, 
where $d$ is the number of elements in some generating system of $F$.
Since $d\le \urk(F)\le \urk(G)$, we are done.
\end{proof}	

Of course, this is a very general rough estimation. For more sharp ones in specific cases see e.g. \cite{LubotSegalBook}.

In \cite{Jabara} E.~Jabara has proved 
\begin{lem}\label{lem:jabara_fin}
Suppose that $\f:F\to F$ is an automorphism of a finite group $G$.
Then $|C(\f)|\le \alpha(R(\f))$, where $\alpha(r)=2^{2^r}$.
\end{lem}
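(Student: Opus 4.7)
The plan is to prove the bound by induction on $|G|$, using Theorem \ref{teo:ext}(a) and \eqref{eq:epi} at each step, with the finite abelian case as the base.

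\textbf{Abelian base.} For $G$ finite abelian, the map $\psi := \f - \Id$ is an endomorphism of $G$ with $C(\f) = \Ker\psi$, and the Reidemeister classes of $\f$ are precisely the cosets of $\Im\psi$. Since $|\Ker\psi| = |\Coker\psi|$ for every endomorphism of a finite abelian group, $|C(\f)| = R(\f)$, which is already far below $\alpha(R(\f))$.

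\textbf{Inductive step.} For a general non-abelian $G$, pick a proper $\f$-invariant normal subgroup $H$; the natural choice is $H = [G,G]$, so that $\overline{G} := G/H$ is abelian. Write $\overline{\f}$ and $\f'$ for the induced maps on $\overline{G}$ and $H$, and set $r := R(\f)$. Then:
\begin{itemize}
\item By \eqref{eq:epi} combined with the abelian base applied to $\overline{G}$, $|C(\overline{\f})| = R(\overline{\f}) \le r$.
\item By Theorem \ref{teo:ext}(a), $R(\f') \le r \cdot |C(\overline{\f})| \le r^2$.
\item The homomorphism $C(\f) \to C(\overline{\f})$ has kernel $C(\f')$, so $|C(\f)| \le |C(\f')| \cdot |C(\overline{\f})|$.
\end{itemize}
The inductive hypothesis applied to $H$ with $\f'$ gives $|C(\f')| \le \alpha(r^2)$, whence $|C(\f)| \le r \cdot \alpha(r^2)$.

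\textbf{Main obstacle.} The naive closure $r \cdot \alpha(r^2) = r \cdot 2^{2^{r^2}}$ vastly exceeds the target $\alpha(r) = 2^{2^r}$, so the induction in this raw form does not close. The hard part is thus to sharpen one of the three ingredients above. The most promising avenue is to exploit the conjugation action of $C(\f)$ on $\f$-invariant substructure inside $H$ (for instance on $C(\f')$ itself, or on the set of its Reidemeister classes): if the stabilizers of this action can be controlled, one obtains a divisibility estimate in place of the crude product $|C(\f')| \cdot |C(\overline{\f})|$. An equivalent reformulation is to improve Theorem \ref{teo:ext}(a) in the specific situation that $G/H$ is abelian, replacing $R(\f') \le r^2$ by something of order $r$; once this is achieved, the double-exponential bound $2^{2^r}$ falls out by a clean induction on $|G|$. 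I expect this closure step, and not the extension-theoretic bookkeeping, to be the main technical hurdle.
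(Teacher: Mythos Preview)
The paper does not give its own proof of this lemma: it is quoted verbatim as a result of Jabara \cite{Jabara} and used as a black box. So there is no proof in the paper to compare your proposal against.

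As for your proposal itself, it is not a proof but a sketch with an explicitly acknowledged gap. Your abelian base case and the three bullet points are correct, but, as you yourself observe, the recursion $|C(\f)|\le r\cdot \alpha(r^2)$ does not close to $\alpha(r)$: iterating it along the derived series of length $\ell$ gives a tower of exponentials of height roughly $\ell$, not the fixed height~$2$ claimed by $\alpha(r)=2^{2^r}$. Nothing in what you wrote indicates how to obtain a bound independent of the derived length of $G$, and the vague suggestion to ``exploit the conjugation action of $C(\f)$ on $\f$-invariant substructure'' is not an argument. In particular, Theorem~\ref{teo:ext}(a) cannot be improved to $R(\f')\le c\cdot r$ in general (already for a direct product $G=H\times A$ with $A$ abelian and $\f$ acting diagonally one has $R(\f')\cdot R(\overline\f)=R(\f)$, so $R(\f')$ can equal $r/R(\overline\f)$, but in genuinely nonsplit extensions the multiplicative loss is real). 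Jabara's actual argument does not proceed by this kind of derived-series induction; if you want to reconstruct the bound you should consult \cite{Jabara} directly rather than try to push the present outline through.
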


Using this estimation, it was proved in \cite{Jabara} that the same holds for an automorphism of finite order
of a residually finite group, and in \cite{FelLuchTro} for an automorphism of a finitely generated residually finite group.
In the same way we prove here the following statement.
\begin{lem}\label{lem:jabara_fr}
Suppose that $\varphi: G\to G$ is an automorphism of a residually finite group of finite upper rank and
$R:=R(\varphi)<\infty$. Then $|C(\f)|\le \alpha(R)<\infty$. 
\end{lem}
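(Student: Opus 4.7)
The plan is to follow the strategy Jabara used for finite-order automorphisms, adapted via Theorem \ref{teo:num_sub_frank}. The key idea is to produce a $\f$-invariant normal subgroup of finite index that separates any chosen finite collection of fixed points, then reduce to the finite case of Lemma \ref{lem:jabara_fin}.

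First, I would let $c_1,\dots,c_k$ be any finite collection of distinct elements of $C(\f)$ and aim to show $k\le \alpha(R)$. Since $G$ is residually finite, there exists a normal subgroup $N$ of finite index $n:=[G:N]$ such that $c_1,\dots,c_k$ have pairwise distinct images in $G/N$.

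The main step, and the place where the upper rank hypothesis is essential, is the manufacture of a $\f$-invariant finite-index normal subgroup $N^{*}\subseteq N$. Since $\f$ is an automorphism, each $\f^j(N)$ is a normal subgroup of $G$ of the same index $n$. By Theorem \ref{teo:num_sub_frank}, the set of subgroups of index $n$ in $G$ is finite (of cardinality at most $(n!)^r$, $r=\urk(G)$), so the $\f$-orbit $\{\f^j(N):j\in\Z\}$ is finite. Set
$$
N^{*}:=\bigcap_{j\in \Z}\f^j(N).
$$
This is a finite intersection of normal subgroups of finite index, hence a normal subgroup of $G$ of finite index, and it is $\f$-invariant by construction. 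Without Theorem \ref{teo:num_sub_frank} this intersection could collapse to something of infinite index, which is why this step is the crux.

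Finally, I would pass to the finite group $F:=G/N^{*}$ and invoke Lemma \ref{lem:jabara_fin}. The map $\f$ induces an automorphism $\ov{\f}$ of $F$, and (\ref{eq:epi}) gives $R(\ov{\f})\le R(\f)=R$. Because $N^{*}\subseteq N$, the images of $c_1,\dots,c_k$ in $F$ are still pairwise distinct, and they all lie in $C(\ov{\f})$. Hence, using monotonicity of $\alpha(r)=2^{2^r}$,
$$
k\le |C(\ov{\f})|\le \alpha(R(\ov{\f}))\le \alpha(R).
$$
Since $k$ and the choice of $c_1,\dots,c_k\in C(\f)$ were arbitrary, this yields $|C(\f)|\le \alpha(R)<\infty$, as required.
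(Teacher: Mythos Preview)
Your proof is correct and follows essentially the same strategy as the paper's: separate finitely many fixed points in a finite quotient, use Theorem~\ref{teo:num_sub_frank} to produce a $\f$-invariant finite-index normal subgroup below it, and then apply Lemma~\ref{lem:jabara_fin}. The only cosmetic difference is that the paper intersects \emph{all} subgroups of the given index to obtain a characteristic subgroup, whereas you intersect only the $\f$-orbit of $N$; both constructions rely on Theorem~\ref{teo:num_sub_frank} in the same way and yield what is needed.
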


\begin{proof}
Let $\{f_1,\dots,f_n\}\subset C(\f)$ be some finite set of $\f$-fixed elements.
Let $H\subset G$ be a normal subgroup o finite index such that, for $p:G\to G/H$, the elements $p(f_i)$ are pairwise distinct.
By Theorem \ref{teo:num_sub_frank}, we can take the intersection of all subgroups of the same index as $H$
to obtain a characteristic (in particular, normal $\f$-invariant) subgroup of finite index $H'\subseteq G$.
For $p':G\to G/H'$, we still have that  $p'(f_i)$ are pairwise distinct.
Also, $\{p'(f_1),\dots,p'(f_n)\}\subset C(\overline{\f})$, where $\overline{\f}: G/H'\to G/H'$ 
is the induced automorphism of a finite group. Then, by Lemma \ref{lem:jabara_fin}
we have
$$
n\le |C(\overline{\f})| \le \alpha(R(\overline{\f})) \le \alpha(R).
$$
Hence, $|C(\f)|\le \alpha(R)$.
\end{proof}

We will work with the following reformulation of the TBFT$_f$. With some additional restriction it was proved 
in \cite{FelTroJGT}.

\begin{lem}\label{lem:equiv_tbftf}
Suppose that $\f$ is an automorphism of $G$ with $R(\f)<\infty$. Then TBFT$_f$ is true for $\f$ if and only if there is a finite group $F$ with an automorphism $\f_F$ and an equivariant epimorphism
$G \to F$ inducing a bijection of Reidemeister classes of $\f$ and $\f_F$.
\end{lem}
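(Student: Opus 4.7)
The plan is to use the classical Burnside--Frobenius theorem on the finite group $F$ as the bridge, together with linear independence of twisted characters.

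For the $(\Leftarrow)$ direction, the pullback $p^*:[\rho]\mapsto[\rho\circ p]$ is a $\wh{\f}$-equivariant injection $\wh{F}\hookrightarrow\wh{G}_f$, equivariance being the identity $(\rho\circ p)\circ\f=\rho\circ\f_F\circ p$. Applying classical Burnside--Frobenius to the finite $F$ and the hypothesis $R(\f_F)=R(\f)$ yields the lower bound
$$
|\wh{G}_f^{\wh{\f}}|\ \ge\ |\wh{F}^{\wh{\f_F}}|\ =\ R(\f_F)\ =\ R(\f).
$$
For the matching upper bound I would introduce, for each $\wh{\f}$-fixed $[\rho]$, the twisted character $\chi_\rho^\f(g):=\tr(U_\rho\rho(g))$ with $U_\rho$ a unitary intertwiner $\rho\circ\f\to\rho$; a direct computation shows $\chi_\rho^\f$ is a class function on the $R(\f)<\infty$ twisted conjugacy classes, and a Schur-orthogonality argument yields linear independence across distinct $\wh{\f}$-fixed classes, giving $|\wh{G}_f^{\wh{\f}}|\le R(\f)$ and hence TBFT$_f$.

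For the $(\Rightarrow)$ direction, assume TBFT$_f$, so $\wh{G}_f^{\wh{\f}}$ has exactly $k=R(\f)$ elements, represented by $\rho_1,\dots,\rho_k$. I would set $\pi:=\bigoplus_i\rho_i$ and $N:=\Ker\pi=\bigcap_i\Ker\rho_i$; since each $[\rho_i]$ is $\wh{\f}$-fixed, each $\Ker\rho_i$ is $\f$-invariant, hence so is $N$. Taking $F:=G/N$ with induced $\f_F$ and canonical epimorphism $p$, each $\rho_i$ descends to a $\wh{\f_F}$-fixed irreducible unitary $\bar\rho_i$ of $F$; since TBFT$_f$ says the $[\rho_i]$'s exhaust $\wh{G}_f^{\wh{\f}}$, the pullback bijects $\wh{F}^{\wh{\f_F}}$ onto $\wh{G}_f^{\wh{\f}}$. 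Once $F$ is known to be finite, classical Burnside--Frobenius then gives $R(\f_F)=|\wh{F}^{\wh{\f_F}}|=k=R(\f)$, and combined with the surjection~\eqref{eq:epi} of Reidemeister classes this equality of cardinalities forces the surjection to be a bijection.

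The main obstacle is the finiteness of $F=G/N$: a priori $\pi(G)$ is only a subgroup of $U(\dim\pi)$, which could be an infinite linear group. The earlier version in~\cite{FelTroJGT} presumably required an additional hypothesis (residual finiteness or finite generation) at this step; removing the hypothesis should use the full strength of TBFT$_f$, namely the finiteness of both $R(\f)$ and $|\wh{G}_f^{\wh{\f}}|$, to rule out an infinite $\pi(G)$.
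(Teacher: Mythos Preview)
Your $(\Leftarrow)$ direction matches the paper's: pull back the $R(\f_F)=R(\f)$ fixed irreducibles of the finite group $F$ to get at least $R(\f)$ fixed classes in $\wh{G}_f$, and bound from above via linear independence of the twisted characters $g\mapsto\tr(U_\rho\rho(g))$ as $\f$-class functions. (The paper invokes linear independence of matrix coefficients from Curtis--Reiner rather than Schur orthogonality, but the effect is the same.)

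Your $(\Rightarrow)$ direction has the correct architecture --- take $N=\bigcap_i\Ker\rho_i$, set $F=G/N$, then use Burnside--Frobenius on $F$ together with~\eqref{eq:epi} to force $R(\f_F)=R(\f)$ and hence a bijection --- but you have not closed the gap you yourself flag: why is $F$ finite? You only observe that $\pi(G)\subset U(\dim\pi)$ could a priori be infinite and speculate that ``the full strength of TBFT$_f$'' should rule this out. The paper supplies exactly this missing step, and it is the heart of the lemma.

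The argument is as follows. Fix one $\rho=\rho_i$ with intertwiner $J$, and consider $f_J(g)=\tr(J\rho(g))$. Since $f_J$ is a $\f$-class function and $R(\f)<\infty$, $f_J$ takes only finitely many values. The left translates $L_g f_J$ span a finite-dimensional space $S$ (isomorphic, via $\f_A\leftrightarrow A$, to $W_\rho\otimes W_\rho^*$ with trivial action on the second factor, hence equivalent to a sum of copies of $\rho$). Choose a basis $L_{g_1}f_J,\dots,L_{g_k}f_J$ of $S$; every element of $S$ is then constant on the common refinement of the level sets of these finitely many finite-valued functions. This produces a finite partition $G=V_1\sqcup\cdots\sqcup V_m$ such that functions in $S$ are constant on each $V_j$ and separate the $V_j$'s. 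Left translation by any $g\in G$ therefore permutes $\{V_1,\dots,V_m\}$, so the $G$-action on $S$ factors through a subgroup of $S_m$; since $\rho$ is a subrepresentation, $\rho(G)$ is finite. Doing this for each $i$ makes $G/N$ finite.

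In short: your plan is correct and aligned with the paper, but incomplete at precisely the point you identified; the paper's contribution over~\cite{FelTroJGT} is the level-set/permutation argument above, which you should incorporate.
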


\begin{proof}
Suppose that such an epimorphism $p$ exists. Denote $R=R(\f)$.
Then there are $\widehat{\f_F}$-fixed irreducible unitary pairwise non-equivalent representations $\rho_1,\dots,\rho_R$ of $F$. 
Hence,  $\rho_1 \circ p,\dots,\rho_R\circ p$ are $\widehat{\f}$-fixed irreducible unitary pairwise non-equivalent representations of $G$. Thus $R$ does not exceed 
the number of $\widehat{\f}$-fixed representation classes. 

Each finite-dimensional $\widehat{\f}$-fixed class of representation $\rho$ has a matrix coefficient
$g\mapsto   \operatorname{Trace} (J\circ \rho(g))$, where $J$ is an intertwining operator
between $\rho$ and $\rho\circ \f$. This coefficient is a $\f$-class function (i.e. it is constant
on each Reidemeister class) and only coefficients of this form have this property  (see \cite{polyc} for details of verification). On the other hand, matrix elements of non-equivalent 
finite-dimensional representations are linear independent 
(see \cite[Corollary (27.13)]{CurtisReiner}).
Thus the number of fixed representation classes can not exceed $R$ and the `if' direction is proved.

Conversely, suppose that TBFT$_f$ is true and $\rho_1,\dots,\rho_R$ are $\widehat{\f}$-fixed
irreducible unitary pairwise non-equivalent representations of $G$. Then these representations 
are finite, i.e. $|\rho_i(G)|<\infty$, $i=1,\dots,R$, because otherwise the corresponding
$\f$-class function has infinitely many values. 
Indeed, for $\rho=\rho_1,\dots,\rho_R$, $f_J(g)=\operatorname{Trace} (J\circ \rho(g))$
has to take finitely many values because $R<\infty$.
Since $f_J$ is a non-trivial matrix coefficient,
 its left translations $L_g f_J$ generate a finite-dimensional representation $\pi$, which is equivalent
to a direct sum of several copies of $\rho$. This follows from the fact, that each matrix coefficient is of the form $\f_A(g)=\operatorname{Trace} (A\circ \rho(g))$ and one obtains an equivariant isomorphism $\f_A\mapsto A$, $A\in \End(W_\rho)=W_\rho \otimes W^*_\rho$
between the space of all matrix coefficients and $W_\rho \otimes W^*_\rho$, 
where $W_\rho$ is the space of the representation $\rho$ and the action on $W^*_\rho$ is trivial. 
(This can be extracted e.g. from the proof of \cite[Theorem 3, p.~124]{Kirillov}.)

The space $S$ of the representation $\pi$ has a basis $L_{g_1} f_J,\dots,L_{g_k} f_J$,
for some finite collection $\{g_1,\dots,g_k\}$.
Then the level sets of any function from $S$ have the form $\cap_i g_i U_j$, where
$U_j$ are the level sets of $f_J$. In particular, it has finitely many values.
Taking some unions of these sets (if necessary) we can form a finite partition
$G=V_1\sqcup\dots\sqcup V_m$ such that elements of $S$ are
constant on $V_i$, $j=1,\dots,m$, and separate them.
Thus any left translation permutes $V_i$  and the representation $G$ on $S$ factorizes
through (a subgroup of) the permutation group on $m$
elements, i.e. a finite group. Then the subrepresentation $\rho$ is finite as well.
It remains to take $H=\cap_{i=1}^R \Ker(\rho_i)$. Then $p: G\to G/H$ is the desired equivariant epimorphism onto a finite group, where on $G/H$ we consider the induced automorphism 
$\overline{\f}$. Indeed, $G/H \subseteq \rho_1(G) \oplus\cdots \oplus \rho_R(G)$.
Hence it is finite. By construction, the map is equivariant. It remains to prove that $R(\overline{\f})=R$. For this purpose we will find $R$ distinct $\widehat{\overline{\f}}$-fixed classes of
representations. 
These classes are $[\rho_i \circ p]$. Indeed, since $\rho_i$ are pairwise non-equivalent and 
$p$ is an epimorphism, then so are  $\rho_i \circ p$.
Since 
$$
\widehat{\f}[\rho_i \circ p]=[\rho_i \circ p \circ \f]=[\rho_i  \circ \overline \f \circ p]
=[\rho_i  \circ p],
$$  
the classes are $\widehat{\overline{\f}}$-fixed and we are done.
\end{proof}

We will need the following statement.

\begin{lem}\label{lem:fin_rank_fin_inn}
	Suppose that $G$ is a finite-by-abelian group, where  $F$ is a finite normal subgroup and the abelian factor group
	$A=G/F$ is of finite Pr\"ufer rank $\rk(A)<\infty$.
	Then the number of  distinct inner automorphisms of $G$ does not exceed $|F|!\cdot |F|^{\rk(A)}$.
\end{lem}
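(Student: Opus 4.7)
The plan is to prove $|\Inn(G)|=|G/Z(G)|\le |F|!\cdot |F|^{\rk(A)}$ by filtering through $Z(G)\subseteq C_G(F)\subseteq G$ (the inclusion $Z(G)\subseteq C_G(F)$ is automatic). First I would bound $[G:C_G(F)]$: conjugation of $G$ on $F$ factors through $G/C_G(F)\hookrightarrow \Aut(F)$, so $[G:C_G(F)]\le |\Aut(F)|\le |F|!$. The remaining task is to show $[C_G(F):Z(G)]\le |F|^{\rk(A)}$.

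The key observation would be that for every $c\in C_G(F)$ and $g\in G$ the commutator $[c,g]=cgc^{-1}g^{-1}$ lies in $Z(F)$. Indeed, since $A=G/F$ is abelian, $[G,G]\subseteq F$; and the identity $cf=fc$ for $f\in F$, together with $gfg^{-1}\in F$, shows by a short calculation that every $f\in F$ commutes with $[c,g]$. Setting $\psi_c(g):=[c,g]\in Z(F)$ and noting that $\psi_c(gf)=\psi_c(g)$ (again from $cf=fc$), one obtains a well-defined map $\psi_c:A\to Z(F)$. The commutator identity $[c,gh]=[c,g]\cdot g[c,h]g^{-1}$ makes $\psi_c$ a $1$-cocycle for the conjugation action of $A=G/F$ on $Z(F)$; this action is well defined because $F$ acts trivially on $Z(F)$. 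Finally, $[cc',g]=c[c',g]c^{-1}\cdot [c,g]=[c',g][c,g]$ (using $c\in C_G(F)$ and $[c',g]\in Z(F)$) shows that $c\mapsto\psi_c$ is a group homomorphism $C_G(F)\to Z^1(A,Z(F))$ (with the target abelian under pointwise multiplication) whose kernel is precisely $Z(G)$.

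It then remains to bound $|Z^1(A,Z(F))|$ by $|F|^{\rk(A)}$. For any finitely generated subgroup $A_0\subseteq A$, the rank hypothesis furnishes at most $r:=\rk(A)$ generators of $A_0$, and a cocycle on $A_0$ is determined by its values on those generators, so $|Z^1(A_0,Z(F))|\le |Z(F)|^r\le |F|^r$. If $A$ is not finitely generated, I would argue that any finite collection of distinct cocycles on $A$ is already separated by finitely many elements of $A$, which generate some finitely generated $A_0$ on which the restrictions remain pairwise distinct; hence every finite subset of $Z^1(A,Z(F))$ has cardinality $\le |F|^r$, and consequently $|Z^1(A,Z(F))|\le |F|^r$. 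Multiplying the two bounds yields $|G/Z(G)|\le |F|!\cdot |F|^{\rk(A)}$. The technical heart of this plan is the commutator calculation $[C_G(F),G]\subseteq Z(F)$; the non-finitely-generated case of $A$ is handled by the finite-subset reduction just described.
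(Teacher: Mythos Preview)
Your argument is correct. The commutator computation $[C_G(F),G]\subseteq Z(F)$ goes through exactly as you sketch once one notes that $C_G(F)$ is normal (being the kernel of $G\to\Aut(F)$), so $gc^{-1}g^{-1}\in C_G(F)$ and hence commutes with every $f\in F$; the remaining verifications (descent to $A$, cocycle identity, homomorphism property, kernel $=Z(G)$, and the finite-subset reduction for $Z^1$) are routine and written out accurately.

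Your route differs from the paper's. The paper argues directly on generating sets: for a finitely generated subgroup $A_0\subseteq A$ with $k\le\rk(A)$ generators, the preimage $G_0=p^{-1}(A_0)$ is generated by $F$ together with $k$ chosen lifts $g_1,\dots,g_k$; any inner automorphism preserves each $F$-coset (since $A$ is abelian), so it is determined by its restriction to $F$ (at most $|F|!$ possibilities) and by the images of the $g_i$ within their cosets (at most $|F|^k$ possibilities). The non-finitely-generated case is then handled by the same separation-by-finitely-many-elements reduction that you use for cocycles. Your filtration $Z(G)\subseteq C_G(F)\subseteq G$ makes the two factors $|F|!$ and $|F|^{\rk(A)}$ appear conceptually as bounds on $[G:C_G(F)]$ and $[C_G(F):Z(G)]$ respectively, and the cocycle interpretation even yields the sharper (unused) bounds $|\Aut(F)|$ and $|Z(F)|^{\rk(A)}$. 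The paper's version is more elementary, avoiding any cohomological language; yours is more structural and would generalize more readily, but both reach the stated inequality with the same finite-subset trick at the end.
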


\begin{proof} 
Let $A_0\subseteq A$ be a subgroup with generators $a_1,\dots ,a_k$, $k\le \rk(A)$. Then $G_0=p^{-1}(A_0)$ has a generating set 	$F\cup\{g_1,\dots,g_k\}$ of cardinality $|F| + k$, where 
$p$ is the projection $p:G\to A$ and
$g_i\in G_0$ are arbitrary elements such that 	$p(g_i)=a_i$, $i=1,\dots,k$.
Then any inner automorphism $\t_g$ of $G_0$ is completely defined by the images of these generators. Since $A$ is abelian, $\t_g$ maps each $F$-coset to itself. Hence, we obtain not more than $|F|!\cdot |F|^k$ possibilities.
	
Now suppose that for the entire $G$ there is $s> |F|!\cdot |F|^{\rk(A)}$ distinct inner automorphisms
	$\tau_{y_i}$, $y_i\in G$, $i=1,\dots,s$. 
Then there are elements $x_{ij}\in G$ such that $\tau_{y_i}(x_{ij})\ne \tau_{y_j}(x_{ij})$.
	Consider $A_0\subseteq A$ generated by all $p(y_i)$ and $p(x_{ij})$ and take $G_0:=p^{-1}A_0$.
	Then $\tau_{y_i}$ remain inner automorphisms of $G_0$ (because $y_i\in G_0$) and remain distinct. 
This contradicts the first part of the proof.
\end{proof}

\begin{cor}\label{cor:tbft_for_finite_by_ab}
Suppose, the number of distinct inner automorphisms of $G$ is finite.
Then the TBFT$_f$ is true for $G$.
In particular, under suppositions of Lemma \ref{lem:fin_rank_fin_inn} the TBFT$_f$ is true for $G$.
\end{cor}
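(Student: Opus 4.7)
The plan is to reduce to Lemma \ref{lem:equiv_tbftf} by exhibiting an explicit $\f$-invariant normal subgroup $N\triangleleft G$ of finite index for which the quotient projection induces a bijection between the Reidemeister classes of $\f$ and those of the induced automorphism $\bar\f$ on $G/N$. Since the second sentence of the corollary follows from the first together with Lemma \ref{lem:fin_rank_fin_inn}, only the first claim requires attention.

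Since $\Inn(G)=G/Z(G)$ is finite, the center $Z:=Z(G)$ has finite index in $G$; it is characteristic, hence $\f$-invariant, and I would apply Theorem \ref{teo:ext}(a) to the extension $Z\triangleleft G\to G/Z$. Because $|C(\bar\f)|\le|G/Z|<\infty$, this yields $R(\f|_Z)<\infty$. As $Z$ is abelian, its $\f|_Z$-Reidemeister classes are precisely the cosets of
$$
N:=\{z\f(z^{-1}):z\in Z\}\le Z,
$$
so $|Z/N|=R(\f|_Z)<\infty$ and consequently $|G/N|<\infty$. Being contained in $Z$, the subgroup $N$ is central and therefore normal in $G$, while $\f$-invariance of $N$ is a one-line check using that $\f(Z)=Z$.

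The crux is to verify that each $\f$-Reidemeister class of $G$ is saturated under multiplication by $N$. For $g\in G$ and $n=z\f(z^{-1})\in N$ with $z\in Z$, centrality of $z$ gives
$$
gn=gz\f(z^{-1})=zg\f(z^{-1})\in\{g\}_\f.
$$
Combined with the general surjection $R(\bar\f)\le R(\f)$ from \eqref{eq:epi}, this promotes the projection $G\to G/N$ to an equivariant epimorphism onto a finite group inducing a bijection of Reidemeister classes, so Lemma \ref{lem:equiv_tbftf} delivers TBFT$_f$ for $G$.

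The main design choice, and the only nontrivial step, is the selection of $N$: it must lie inside the center both so that it is automatically normal in $G$ and so that the commutation $gz=zg$ can be exploited to establish the saturation above. The hypothesis that $G/Z$ is finite plays only an auxiliary role, namely to guarantee through Theorem \ref{teo:ext}(a) that $R(\f|_Z)$, hence $|Z/N|$, is finite; once that is secured the argument is essentially a one-line commutation.
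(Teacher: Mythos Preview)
Your proof is correct, and it takes a genuinely different route from the paper's. The paper never passes to the center: instead it lets $G$ act by right translation on the finite set of all shifts $\{g\}_\f x$ of Reidemeister classes (finite because, by Lemma~\ref{lem:shift}, each shift is a Reidemeister class of some $\tau_{x^{-1}}\circ\f$, and there are only $t=|\Inn(G)|$ such automorphisms, each with $R(\f)$ classes). The kernel of the resulting permutation representation $\sigma:G\to S_n$ is shown to be $\f$-invariant and to satisfy $x\cdot\Ker\sigma\subseteq\{x\}_\f$, giving the required finite equivariant quotient.

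Your approach is more direct and more explicit: you simply take $N=(1-\f)(Z)$, exploiting that $|G/Z|<\infty$ to bound $R(\f|_Z)$ via Theorem~\ref{teo:ext}(a), and then the one-line centrality computation $gn=zg\f(z^{-1})$ does all the work. This avoids the bookkeeping about shifted classes and their stabilizers, and yields a concrete (and in general smaller) normal subgroup; in fact your $N$ is always contained in the paper's $\Ker\sigma$. The paper's argument, on the other hand, is tied more intrinsically to the Reidemeister-class structure and does not explicitly invoke the quotient $G/Z$ or Theorem~\ref{teo:ext}, so it reads as slightly more self-contained within the twisted-conjugacy formalism. Either way, both arguments land on Lemma~\ref{lem:equiv_tbftf} in the same manner.
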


\begin{proof} 
Let $\f:G\to G$ be an automorphism with a finite Reidemeister number.
Consider shifts of Reidemeister classes. By Lemma \ref{lem:shift}, there is only finitely many such sets,
namely $n \le R(\f)\cdot t$, where $t$ is the number of distinct inner automorphisms.
Thus there is a natural homomorphism $\sigma: G \to S_n$, where $S_n$ is the permutation group.
Denote by $R_n$ its image. The kernel of $\sigma$ is the intersection of shift stabilizers of Reidemeister
classes of $\f$.  Each of these stabilizers is $\f$-invariant, because, if $z$ belongs to the stabilizer 
$\St(\{x\}_\f)$
of $\{x\}_\f$ then
\begin{eqnarray*}
gx\f(g^{-1})\f(z)&=&gx\f(x)\f(x^{-1} g^{-1}) \f(z)=\f\left[ \f^{-1} (gx) x \f(\f^{-1}(x^{-1}g^{-1}))z\right]\\
&=&\f(hx\f(h^{-1}))=(\f(h)x^{-1}) x \f(x \f(h^{-1})),
\end{eqnarray*}
for some $h$.
So $\Ker\sigma$ is $\f$-invariant and we can consider the induced automorphism $\f_R:R_n\to R_n$.
We claim that $\sigma:G\to R_n$ induces a bijection of Reidemeister classes. To prove this, we need to verify
that any Reidemeister class of $\f$ is a union of cosets of $\Ker\sigma$. 
Consider an arbitrary element $x\in \{x\}_\f$ and  $z \in \St(\{x\}_\f)$. Then
$
x z = g x \f(g^{-1}),
$  
for some $g$. Hence, $x \in x  \Ker\sigma \subseteq x \St(\{x\}_\f) \subseteq \{x\}_\f$.
Thus, each element is included in its Reidemeister class together with some coset   of $\Ker\sigma$ and we are done.
\end{proof}

\section{Proof of Theorem A}\label{sec:jabaratype}

\begin{proof}[Proof of Theorem A]
Our argument will be close to some parts of \cite{Jabara}. 	
For a residually finite group $G$ consider an exhausting family of subgroups
of finite index $H_\lambda$, $\lambda\in \Lambda$. By Theorem \ref{teo:num_sub_frank},
taking the intersection of all subgroups of a fixed index, we may assume $H_\lambda$ to be characteristic, in particular normal and $\f$-invariant. Denote $F_\lambda:=G/H_\lambda$ and
by $\f_\lambda: F_\lambda\to F_\lambda$ the induced automorphism. Then $R(\f_\lambda)\le R(\f)=:R<\infty$.
Hence by Lemma \ref{lem:jabara_fr} we have $|C(\f_\lambda)|\le \alpha(R)$. Of course, 
$\rk(F_\lambda)=\urk(F_\lambda)\le \urk(G)=r$. 
Applying Theorem \ref{teo:Jaikin2002} we can find, for each $\lambda\in\Lambda$, 
a soluble characteristic subgroup  	
$F^0_\lambda\subseteq F_\lambda$ of length $\le L(r)=:L$ and index $\le I(\alpha(R),r)=:J$.	
For each $\lambda$, 
denote by $p_\lambda:G\to F_\lambda$ the natural projection and 
by $H^0_\lambda$ the pre-image $H^0_\lambda=(p_\lambda)^{-1}(F^0_\lambda)$. 
Let $H:=\cap_\lambda H^0_\lambda$. Then each $H^0_\lambda$ is a characteristic subgroup of index $\le J$.
Then $H$ is characteristic and of finite index by Theorem \ref{teo:num_sub_frank}.

It remains to prove that $H$ is soluble. Suppose the opposite: $H$ is not a soluble group of derived length $\le L$. Thus there exists an expression $\omega(h_1,\dots,h_k)\ne e$ in products and commutators, which has the commutator length $L+1$. Choose $\lambda$ such that 
\begin{equation}\label{eq:nontriv_comm}
	p_\lambda (\omega(h_1,\dots,h_k))\ne e\in F_\lambda.
\end{equation}
Since $p_\lambda (\omega(h_1,\dots,h_k))=\omega(p_\lambda (h_1),\dots,p_\lambda (h_k))$, 
$h_i\in H \subseteq H_\lambda$ and $p_\lambda (h_i)\in F^0_\lambda$, the inequality (\ref{eq:nontriv_comm})
contradicts the property of $F^0_\lambda$ to be soluble of 	length $\le L$. Thus $H$ is a characteristic
soluble subgroup of finite index in $G$.
\end{proof}

\begin{rmk}
	\rm
Any \emph{finitely generated} residually finite group of finite upper rank 
is soluble-by-finite by Mann-Segal \cite{MannSegal1990}.
\end{rmk}

\section{Abelian finite rank groups and their automorphisms}\label{sec:abel}
We need a separate (from \ref{lem:jabara_fr}) study of abelian quotients, which may be not residually finite.
We start from the torsion-free case.

\begin{lem}\label{lem:no_fixed_for_free} {\rm (cf.~\cite{FelTroNilpFR2021RJMP})}
	An automorphism $\f$ of any torsion free abelian group $A$ of finite rank with finite Reidemeister number has $C(\f)=\{e\}$.
\end{lem}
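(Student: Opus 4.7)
My plan is to work additively and reduce the statement to a piece of linear algebra over $\Q$. Since $A$ is abelian, two elements $x,y\in A$ are $\f$-twisted conjugate exactly when $y-x\in(\f-\Id)A$, so Reidemeister classes are cosets of the subgroup $(\f-\Id)A$ and the hypothesis $R(\f)<\infty$ is equivalent to $[A:(\f-\Id)A]<\infty$. I want to deduce that $\Ker(\f-\Id)=\{0\}$, so I shall argue by contradiction and assume there exists a nonzero $a\in A$ with $\f(a)=a$.

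The next step is to pass to the ambient finite-dimensional $\Q$-vector space $V:=A\otimes_{\Z}\Q\cong\Q^n$, where $n=\rk(A)<\infty$; torsion-freeness of $A$ gives an embedding $A\hookrightarrow V$, and $\f$ extends uniquely to a $\Q$-linear automorphism $\Phi$ of $V$. The nonzero fixed element $a$ shows that $1$ is an eigenvalue of $\Phi$, so $\Phi-\Id$ fails to be surjective and $W:=(\Phi-\Id)V$ is a \emph{proper} $\Q$-subspace of $V$; in particular the quotient $V/W$ is a nonzero $\Q$-vector space.

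To conclude, I will examine the composition $A\hookrightarrow V\twoheadrightarrow V/W$, whose image is $(A+W)/W\cong A/(A\cap W)$. Because $A$ generates $V$ as a $\Q$-vector space, its image $\Q$-spans $V/W\ne 0$; being a nonzero subgroup of a torsion-free $\Q$-vector space, that image is automatically infinite. On the other hand, the inclusion $(\f-\Id)A\subseteq A\cap W$ is immediate, so the natural surjection $A/(\f-\Id)A\twoheadrightarrow A/(A\cap W)$ forces the left-hand side to be infinite as well; thus $R(\f)=\infty$, contradicting the hypothesis.

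The only delicate point is that $A$ is \emph{not} assumed finitely generated, so the classical structure theorem for finitely generated abelian groups is unavailable; however, the finite-rank hypothesis alone supplies the ambient finite-dimensional $\Q$-vector space, which is all that the argument actually uses, and the whole proof is an application of the Fitting-type dichotomy between the fixed part and the "moving" part of $\Phi$.
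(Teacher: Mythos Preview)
Your argument is correct. The paper's proof is a one-line application of Fuchs' lemma \cite[15.2.3]{Robinson}: an endomorphism $\psi$ of a torsion-free abelian group of finite rank is injective if and only if $[A:\psi(A)]<\infty$; taking $\psi=1-\f$ gives the result immediately. What you have written is essentially a self-contained proof of the needed implication of Fuchs' lemma, carried out by passing to the rationalization $V=A\otimes_{\Z}\Q$ and using that a $\Q$-linear endomorphism of a finite-dimensional space is injective iff surjective. So the underlying idea is the same; the paper trades your explicit linear-algebra reduction for a black-box citation, while your version buys independence from the reference at the cost of a few extra lines.
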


\begin{proof}
By Fuchs' lemma (see \cite[15.2.3]{Robinson}),
an endomorphism  $\psi$ of a torsion-free abelian group $A$ of
	finite rank is injective if and only if the index of $\Im(\psi)$ is finite.
Since $R(\f)=|\Coker(1-\f)|<\infty$, we can apply this lemma to $\psi=1-\f$
and obtain
$\{e\}=\Ker(1-\f)=C(\f)$.	
\end{proof}	

Now suppose that $\f:A\to A$ is an automorphism of a general finite rank abelian group with $R(\f)<\infty$.
Let $T$ be the torsion subgroup, hence $A_0:=A/T$ is torsion-free. The subgroup $T$ is fully invariant.
Denote by $\f_T:T\to T$ and $\f_0:A_0\to A_0$ the automorphisms induced by $\f$.

By Lemma \ref{lem:no_fixed_for_free}, $C(\f_0)=\{e\}$. 
Hence, by Theorem \ref{teo:ext}, $R(\f_T)<\infty$.
Also,
\begin{equation}\label{eq:fixed_on_T}
	C(\f)=C(\f_T).
\end{equation}

For $p$ prime, denote by $\Z(p^\infty)$ the \emph{quasicyclic} (or \emph{Pr\"ufer}) \emph{group of type $p^\infty$}.
An abelian finite rank torsion group is a direct sum of finitely many quasicyclic and cyclic groups (see e.g.
\cite[4.1.1 + 4.3.13]{Robinson}):
\begin{equation}\label{eq:decomp_of_T}
	T=T_1\oplus \cdots \oplus T_m \oplus F, \qquad T_i:= \Z(p_i^\infty)^{d_i},
	\quad p_i\ne p_j\mbox{ if }i\ne j, \qquad |F|<\infty.
\end{equation}

We will need the following simple observation.
\begin{lem}\label{lem:quasi_sub}
Suppose that $G=\Z(p^\infty)^d$ (finite direct sum) and $H$ is a subgroup of $G$.
Then either $H$ is finite or $G/H\cong \Z(p^\infty)^{d'}$ with $d'<d$.  	
\end{lem}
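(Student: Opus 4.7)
The plan is to split off from $G$ the maximal divisible subgroup of $H$, reducing the problem to computing the quotient of a smaller Pr\"ufer power by a finite subgroup. Let $D=D(H)$ be the maximal divisible subgroup of $H$; as a divisible abelian $p$-group, $D\cong \Z(p^\infty)^f$ for some $0\le f\le d$. Since $D$ is injective, it is a direct summand of $G$, so one can fix a decomposition $G=D\oplus G'$ with $G'\cong \Z(p^\infty)^{d-f}$, and because $D\subseteq H$ this induces $H=D\oplus R$ with $R:=H\cap G'$, so that $G/H\cong G'/R$.

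The crucial step is to show $R$ is finite. By maximality of $D$ the group $R$ is reduced (a divisible subgroup of $R$ together with $D$ would form a strictly larger divisible subgroup of $H$, since $D\cap G'=0$). Now use that $G=\Z(p^\infty)^d$ is an Artinian $\Z$-module: the descending chain $R\supseteq pR\supseteq p^2R\supseteq\cdots$ stabilizes, so $p^nR=p^{n+1}R$ for some $n$; this stable subgroup is $p$-divisible, hence divisible (we are dealing with $p$-groups), and hence trivial by reducedness of $R$. Therefore $R$ has bounded exponent $\le p^n$, so by Pr\"ufer's theorem $R$ is a direct sum of cyclic $p$-groups; the number of summands equals $\dim_{\mathbb{F}_p} R[p]\le \dim_{\mathbb{F}_p} G[p]=d$ and each summand has order at most $p^n$, whence $|R|<\infty$.

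It remains to identify $G'/R$. This group is divisible (as a quotient of the divisible $G'$), so $G'/R\cong \Z(p^\infty)^{d'}$ for some $d'$. Applying the snake lemma to $0\to R\to G'\to G'/R\to 0$ with vertical multiplication by $p$ (and using $pG'=G'$) yields an exact sequence
$$
0\to R[p]\to G'[p]\to (G'/R)[p]\to R/pR\to 0,
$$
so that $d'=\dim_{\mathbb{F}_p}(G'/R)[p]=(d-f)+\dim_{\mathbb{F}_p}R/pR-\dim_{\mathbb{F}_p}R[p]=d-f$, where the last equality uses that for a finite abelian $p$-group $R$ one has $\dim_{\mathbb{F}_p}R/pR=\dim_{\mathbb{F}_p}R[p]$ (both equal the minimal number of generators). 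Hence $G/H\cong \Z(p^\infty)^{d-f}$; if $H$ is infinite, then finiteness of $R$ forces $D\ne 0$, so $f\ge 1$ and $d'=d-f<d$, as required.

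The main obstacle is proving that $R$ is finite: this is where the finite rank assumption on $G$ is essential, via the bound $\dim_{\mathbb{F}_p}R[p]\le d$, combined with the Artinian property that forces bounded exponent and places $R$ within the scope of Pr\"ufer's theorem on bounded abelian groups.
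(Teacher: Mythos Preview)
Your proof is correct and follows the same overall strategy as the paper: split off the maximal divisible part $D\cong\Z(p^\infty)^f$ of $H$ as a direct summand of $G$, show the remaining piece $R=H\cap G'$ is finite, and identify $G/H\cong G'/R$ as a Pr\"ufer power of smaller exponent. The differences are in execution. The paper simply invokes the structure theorem for finite-rank abelian torsion groups (\cite[4.1.1 and 4.3.13]{Robinson}) to write $H=H'\oplus F$ with $F$ finite, whereas you derive finiteness of $R$ from first principles via the Artinian property of $\Z(p^\infty)^d$ and Pr\"ufer's theorem on bounded abelian groups. Conversely, the paper is content with the inequality $d'\le d-f$, while your snake-lemma computation pins down $d'=d-f$ exactly. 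Your argument is thus more self-contained but longer; the paper's is terser but leans more heavily on cited structure results.
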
 

\begin{proof}
The subgroup $H$ is a finite rank
$p$-group, hence $H=H'\oplus F$, where $H'\cong \Z(p^\infty)^{m'}$ and $F$ is a finite 
sum of cyclic groups. The subgroup $H$ is infinite if and only if $m'\ge 1$.
Since $H'$ is a divisible subgroup, it has a complement $H''$, so $G=H'\oplus H''$.
Then $H''\cong G/H'$ is a divisible $p$-group. Hence, $H''\cong \Z(p^\infty)^{m''}$. 
Similarly, $G/H\cong \Z(p^\infty)^{d'}$.
Since the number of $\Z(p^\infty)$-summands is an invariant,
$d=m'+m''$. Since $H''$ has a subgroup $F'$, isomorphic $F$, and $H''/F'\cong G/H$, then $d'\le m''=d-m'$.
Hence, if $H$ is infinite, then $d'<d$.
\end{proof}	

 \begin{lem}\label{lem:quasi_fix}
Suppose that $G=\Z(p^\infty)^d$ and $\f:G\to G$ is an endomorphism with $R(\f)<\infty$.
Then $|C(\f)|<\infty$. 	
 \end{lem}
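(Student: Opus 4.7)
The plan is to work throughout with the endomorphism $\psi := 1 - \varphi$ of the abelian group $G$, so that $C(\varphi) = \Ker(\psi)$ and the hypothesis $R(\varphi) < \infty$ becomes $|G/\Im(\psi)| < \infty$. The goal thus reduces to showing $|\Ker(\psi)| < \infty$, and the natural tool is Lemma \ref{lem:quasi_sub}, which I would apply twice---once to $\Ker(\psi)$ and once to $\Im(\psi)$---to force a dimension contradiction in the case that $\Ker(\psi)$ is infinite.

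First, I would apply Lemma \ref{lem:quasi_sub} to $H = \Ker(\psi)$. If this subgroup is finite, the proof is complete; otherwise $G/\Ker(\psi) \cong \Z(p^\infty)^{d'}$ for some $d' < d$, and the first isomorphism theorem gives $\Im(\psi) \cong \Z(p^\infty)^{d'}$ as well. Then I would apply the same lemma to $H = \Im(\psi)$. The ``finite'' branch of that dichotomy would force $d' = 0$, hence $\psi = 0$ and $\varphi = \Id$, yielding $R(\varphi) = |G| = \infty$ and contradicting the hypothesis. The ``infinite'' branch gives $G/\Im(\psi) \cong \Z(p^\infty)^{d''}$ with $d'' < d$; combined with the finiteness of $G/\Im(\psi)$ this forces $d'' = 0$, so $\psi$ is surjective. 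But then $G/\Ker(\psi) \cong \Im(\psi) = G \cong \Z(p^\infty)^d$, contradicting $d' < d$.

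In either branch the assumption $|\Ker(\psi)| = \infty$ becomes untenable, so $C(\varphi) = \Ker(\psi)$ is finite. The substantive content has already been absorbed into Lemma \ref{lem:quasi_sub}; the only delicate point in the bookkeeping---and hence the ``main obstacle''---is to remember that the only finite power of $\Z(p^\infty)$ is the trivial one, so ``$G/H$ finite'' in the statement of the lemma actually forces $H = G$. Once this observation is kept in mind, the two-step comparison of divisible ranks $d'$ and $d''$ runs smoothly.
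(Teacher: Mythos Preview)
Your argument is correct and is in fact cleaner than the paper's. The paper proves the lemma by an inductive descent: starting from $G_0=G$, it passes successively to $G_{i+1}=G_i/C(\f_i)$, using Lemma~\ref{lem:quasi_sub} to ensure the number of Pr\"ufer summands strictly decreases at each step, and invoking Theorem~\ref{teo:ext} to propagate the condition $R(\f_i)<\infty$ down the tower; the final contradiction is obtained when the process terminates. Your approach avoids this iteration entirely: you stay inside the single abelian group $G$, apply Lemma~\ref{lem:quasi_sub} once to $\Ker\psi$ and once to $\Im\psi$, and compare the resulting exponents $d'<d$ and $d''$ directly. This buys you a shorter proof that does not rely on Theorem~\ref{teo:ext} at all, only on the first isomorphism theorem and the invariance of the number of $\Z(p^\infty)$-summands. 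The paper's route, on the other hand, is phrased so as to parallel the general machinery used elsewhere in the article, which may be why the author chose it.
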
 

\begin{proof}
Consider $C(\f)=\Ker(1-\f)$, which is a $\f$-invariant subgroup.
If $|C(\f)|=\infty$, consider $\f_1: G_1\to G_1$, where $G_1=G/C(\f)$. If $|C(\f_1)|=\infty$ then continue the process.
By Lemma \ref{lem:quasi_sub} the process will stop in several steps.  
At that moment the following cases can occur: 

1) $C(\f_i)=G_i$ and the next factor group will be trivial. Then $\f_i$ is the identity map of an infinite abelian group. Hence,
by Theorem \ref{teo:ext} $\infty=R(\f_i) \le R(\f)<\infty$. A contradiction.

2) $|C(\f_i)|<\infty$, but $|C(\f_{i-1})|=\infty$ (otherwise we stop at the previous step). (Here we denote $G_0=G$ and $\f_0=\f$.)
By Theorem \ref{teo:ext} $R(\f_{i-1})<\infty$.
Then by Theorem \ref{teo:ext} $R(\f'_{i-1})<\infty$, where $\f'_{i-1}=\f_{i-1}|_{C(\f_{i-1})}$. But $\f'_{i-1}$ is the identity map of an infinite abelian group
and $R(\f'_{i-1})=\infty$. That is why there were no previous steps, i.e. $i=0$, and $|C(\f)|<\infty$ as desired.
\end{proof}	

Since $T_1\oplus \cdots \oplus T_m$ is fully invariant as the divisible part,
and each its $p_i$-component $T_i$ is also invariant, we have
$$
C(\f_T)\le |F|\cdot \prod_{i=1}^{m} C(\f_{T_i}),\mbox{ where }\f_{T_i}=\f_T|_{T_i}.
$$  
Also, by Theorem  \ref{teo:ext},
$$
\prod_{i=1}^{m} R(\f_{T_i})=R(\f_T|T_1\oplus \cdots \oplus T_m) \le R(\f_T) \cdot |F| <\infty.
$$
Applying Lemma \ref{lem:quasi_fix} we obtain the main statement of this section:
\begin{teo}\label{teo:fin_fix_Ai}
Let $\f$ be an automorphism of an abelian group of finite rank with $R(\f)<\infty$. Then $|C(\f)|<\infty$.
\end{teo}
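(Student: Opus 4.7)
The strategy is to reduce the problem to the torsion subgroup $T$ of $A$, decompose $T$ into its primary divisible components plus a finite remainder, and then invoke Lemma \ref{lem:quasi_fix} on each quasicyclic block. Almost all of the ingredients have been laid out in the paragraphs just preceding the theorem, so the proof should amount to assembling them in the right order.

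First, I would pass from $A$ to its torsion subgroup. Since $T$ is fully invariant, $\f$ induces $\f_T:T\to T$ and $\f_0:A_0\to A_0$ on the torsion-free quotient $A_0=A/T$. The quotient $A_0$ still has finite rank, so Lemma \ref{lem:no_fixed_for_free} gives $C(\f_0)=\{e\}$, and by Theorem \ref{teo:ext}(b) each Reidemeister class of $\f_T$ is the intersection with $T$ of a Reidemeister class of $\f$, so $R(\f_T)\le R(\f)<\infty$. The same triviality of $C(\f_0)$ yields the identity (\ref{eq:fixed_on_T}), that is, $C(\f)=C(\f_T)$. So it suffices to bound $|C(\f_T)|$.

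Next I would invoke the structural decomposition (\ref{eq:decomp_of_T}): $T=T_1\oplus\cdots\oplus T_m\oplus F$ with $T_i=\Z(p_i^\infty)^{d_i}$ and $F$ finite. The divisible subgroup $T_1\oplus\cdots\oplus T_m$ is fully invariant (it is the maximal divisible subgroup of $T$), and within it each $p_i$-primary summand $T_i$ is also $\f_T$-invariant. Hence $\f_T$ restricts to $\f_{T_i}$ on each $T_i$ and induces an automorphism of the finite quotient $\cong F$. Applying Theorem \ref{teo:ext}(a) twice yields $R(\f_{T_i})\le R(\f_T)\cdot|F|<\infty$ and the product bound $|C(\f_T)|\le |F|\cdot\prod_{i=1}^m|C(\f_{T_i})|$ already displayed in the excerpt.

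The only remaining step is to bound $|C(\f_{T_i})|$ for each $i$, and this is precisely Lemma \ref{lem:quasi_fix} applied to the quasicyclic block $T_i=\Z(p_i^\infty)^{d_i}$ and the endomorphism $\f_{T_i}$ of finite Reidemeister number. Combining these finitely many finite bounds with $|F|<\infty$ gives $|C(\f)|=|C(\f_T)|<\infty$. The genuinely nontrivial ingredient behind this chain, and the main obstacle overcome earlier, is Lemma \ref{lem:quasi_fix}: the quasicyclic summands are not residually finite, so Lemma \ref{lem:jabara_fr} does not apply directly, and one has to run the iterative quotient argument using Lemma \ref{lem:quasi_sub} to force the process to terminate in finitely many steps.
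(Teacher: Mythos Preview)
Your proposal is correct and follows essentially the same route as the paper: the paragraphs preceding Theorem~\ref{teo:fin_fix_Ai} already establish (\ref{eq:fixed_on_T}), the decomposition (\ref{eq:decomp_of_T}), the bound $|C(\f_T)|\le |F|\cdot\prod_i |C(\f_{T_i})|$, and the finiteness of each $R(\f_{T_i})$, so the paper's proof is literally the one-line remark ``Applying Lemma~\ref{lem:quasi_fix} we obtain the main statement of this section.'' Your write-up simply spells out this assembly, and the only cosmetic imprecision is calling the direct-sum factorization $\prod_i R(\f_{T_i})=R(\f_T|_{T_1\oplus\cdots\oplus T_m})$ an instance of Theorem~\ref{teo:ext}(a)---in the abelian setting it is just $|\Coker(1-\f_1\oplus\f_2)|=|\Coker(1-\f_1)|\cdot|\Coker(1-\f_2)|$.
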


\section{Proof of Theorem B}\label{sec:tbftforfiniterank}

\begin{proof}[Proof of Theorem B]
Suppose, $\f:G\to G$ is an automorphism with $R(\f)<\infty$. Then Theorem A implies that $G$ is soluble-by-finite.

Let us remark that the soluble subgroup $H$ of finite index constructed in 
Theorem A is characteristic and one can reduce 
the proof of TBFT$_f$ to the case of a soluble $G$. Indeed, for the projection $p:G\to G/H$ and the
induced automorphism $\overline{\f}:G/H\to G/H$, consider some representatives 
$g_1 ,\dots, g_r$ (with $g_1=e$) of
Reidemeister classes of  ${\f}$. Denote by $\f'$ the restriction of $\f$ to $H$.
For each $\t_{g_j}$, $j=1,\dots,r$, one has $R(\t_{g_j} \circ \f')<\infty$ by Theorem \ref{teo:ext} and since
$|G/H|<\infty$.
If $H$ has the TBFT$_f$ property, one can find, for each $j$,  a normal $\t_{g_j} \circ  \f'$-invariant subgroup
$H_j$ such that $p_j: H\to H/H_j$ gives a bijection of Reidemeister classes of  $\t_{g_j} \circ  \f'$ and of the
induced automorphism of $H/H_j$.
Since the rank of $H$ is finite,  one can consider the intersection $H'_j$ of all subgroups of $H$ of the same index as
$H_j$. Then by \ref{teo:num_sub_frank} $H'_j$ is a characteristic subgroup of $H$ of finite index.
The same is true for $H^0:=\cap_{j=1}^r H'_j$. So, we can consider an equivariant epimorphism
$p^0: G\to G/H^0$, where $G/H^0$ is finite. It remains to verify that $p^0$ gives a bijective mapping of 
Reidemeister classes. As in the proof of Corollary \ref{cor:tbft_for_finite_by_ab}, we need to prove that Reidemeister classes of $\f$ are some unions of cosets of $H^0$.
For this purpose, consider an arbitrary $g\in G$. Then, for some $j$, $j=1,\dots,r$,
we have $g\in \{g_j\}_\f$. Hence, $g=x g_j \f(x^{-1})$, for some $x\in G$.
Then
$
H^0 g = H^0 x g_j \f(x^{-1}) = x H^0 g_j \f(x^{-1}) 
$.
Since $H^0\subseteq \{e\}_ {\t_{g_j} \circ  \f'}$, for any $h_0 \in H^0$ and some $h\in H$,
we have $h_0=h (\t_{g_j} \circ  \f')(h^{-1})$. Then
$$
x h^0 g_j \f(x^{-1}) = x h g_j  (\f')(h^{-1}) (g_j)^{-1} g_j \f(x^{-1}) =
x h g_j  (\f')(h^{-1})  \f(x^{-1}) 
 \in \{g_j\}_\f.
$$
Hence,
\begin{equation}\label{eq:inner_and_quot}
g\in H^0 g \subseteq \{g_j\}_\f. 
\end{equation}
So any class is a union of $H^0$-cosets and the reduction is proved. 

Thus we suppose $G$ to be soluble and consider the derived series of $G$:
\begin{equation}\label{eq:deriv_G}
	G=G^{(0)}\supset G'=G^{(1)} \supset G^{(2)} \supset \cdots \supset G^{(k)}=\{e\},
	\qquad G^{(i+1)}=[G^{(i)},G^{(i)}], 
\end{equation}
of characteristic subgroups. Denote by $A_i:=G^{(i)}/G^{(i+1)}$ the corresponding
abelian groups and by $G_i:=G/G^{(i)}$ the corresponding factor groups. 
Denote the induced automorphism by
$$
\f_i:=G^{(i)}\to G^{(i)},\qquad \a_i: A_i\to A_i, \qquad \b_i: G_i \to G_i.
$$

First we will prove inductively that $R(\f_i)<\infty$, and similarly, by Lemma \ref{lem:shift}
applied to the initial $\f$,
that
\begin{equation}\label{eq:R_is_fin_for_i}
R(\t_g\circ \f_i)<\infty, \qquad\mbox{ for any }g\in G,\quad i=0,\dots,k.
\end{equation}
Indeed, by Theorem \ref{teo:fin_fix_Ai},  $|C(\a_0)|<\infty$. Then, by Theorem \ref{teo:ext}.a),
$R(\f_1)<\infty$. Since the finite rank condition is hereditary for subgroups and factor groups,
the next steps of the induction are quite similar to the first one.  

Now we will argue by induction to prove the theorem.
Since $G^{(k-1)}$ is abelian, TBFT$_f$ is true for it. Suppose that the theorem is proved for $G^{(i+1)}$ and prove it for $G^{(i)}$.

Choose representatives $e=g_1,\dots,g_r$ in $G^{(i)}$ of Reidemeister classes of $\f_i$ 
For each $g_j$ ($j=1,\dots,r$), choose a subgroup $H_j$ realizing the TBFT$_f$ for $\t_{g_j}\circ \f_{i+1}$.
In other words, $H_j$ is a normal $\t_{g_j}\circ \f_{i+1}$-invariant subgroup of $G^{(i+1)}$ and the projection
$G^{(i+1)}\to G^{(i+1)}/H_j$ induces a bijection of Reidemeister classes of $\t_{g_j}\circ \f_{i+1}$ and of those of the induced automorphism of $ G^{(i+1)}/H_j$ (equivalently each Reidemeister class of $\t_{g_j}\circ \f_{i+1}$ is a union of some cosets of $H_j$). 

Consider $H_0:=\cap_{j=1}^r H_r$. This is a subgroup of finite index in $G^{(i+1)}$.
By Theorem \ref{teo:num_sub_frank}
the intersection $H$ of all subgroups of $G^{(i+1)}$ of the same index as $H_0$ is a characteristic subgroup of $G^{(i+1)}$ of finite index. We have $H\subseteq H_0 \subseteq H_j$ ($j=1,\dots,r$). 

Then $G^{(i)} \to G^{(i)}/H$ induces a bijection of Reidemeister classes. This can be proved literally like for $G\to G/H^0$ in the beginning of this proof.
Finally $G^{(i)}/H$ is finite-by-abelian, namely $(G^{(i+1)}/H)$-by-$A_i$. Since $A_i$ has finite rank, we apply Lemma \ref{lem:fin_rank_fin_inn} and Corollary \ref{cor:tbft_for_finite_by_ab}
to see that TBFT$_f$ is true for $G^{(i)}/H$ and there is an equivariant projection onto a finite group $G^{(i)}/H\to F$ inducing a bijection of Reidemeister classes. Then the composition $G^{(i)}\to G^{(i)}/H\to F$ gives a bijection of Reidemeister classes,
which proves the TBFT$_f$  for $G^{(i)}$. The induction completes the proof.
\end{proof}


\end{document}